\newtheorem{theorem}{Theorem}
\newtheorem{definition}[theorem]{Definition}
\newtheorem{proposition}[theorem]{Proposition}
\newtheorem{remark}[theorem]{Remark}
\newenvironment{proof}[1][Proof]{\noindent\textbf{#1.} }{\ \rule{0.5em}{0.5em}}
\begin{document}

\title{Volume forms for time orientable Finsler spacetimes\thanks{%
\copyright\ \TEXTsymbol{<}2016\TEXTsymbol{>}. This manuscript version is
made available under the CC-BY-NC-ND 4.0 license
http://creativecommons.org/licenses/by-nc-nd/4.0/}}
\author{Nicoleta Voicu \\
"Transilvania" University, 50, Iuliu Maniu str., Brasov, Romania\\
e-mail: nico.voicu@unitbv.ro}
\date{}
\maketitle

\begin{abstract}
The paper proposes extensions of the notions of Busemann-Hausdorff and
Holmes-Thompson volume to time orientable Finslerian spacetime manifolds.

These notions are designed to also make sense in cases when the Finslerian
metric tensors are either not defined or degenerate along some directions in
each tangent space - which is the case with the majority of Lorentzian
Finsler metrics used in applications. This feature makes it possible to
build well-defined field-theoretical integrals having such metrics as a
background.
\end{abstract}

\textbf{Keywords}: Lorentz-Finsler manifold, time orientability, volume form

\textbf{MSC 2010: }53B40, 53C50, 53C60, 53C80

\section{Introduction}

Finsler spaces represent a natural geometric framework for applications in
physics and biology. Among these, field-theoretical applications have a
peculiar importance and are the most numerous (just a few examples: \cite%
{Asanov}, \cite{Barletta}, \cite{Bogoslovsky}, \cite{Javaloyes}, \cite%
{Kouretsis}, \cite{Laemmerzahl}, \cite{Minguzzi}, \cite{Wohlfarth}, \cite%
{Stavrinos}, \cite{Vacaru}, \cite{Hilbert action}). But, these applications
generally require metrics to be of Lorentzian signature. And, while positive
definite Finsler metrics are quite well studied and understood, there are
basic geometric questions to still be clarified on Finslerian spacetime
metrics, on the answer of which depend most (if not all) field-theoretical
applications.

One of these basic questions (which has so far remained, to our knowledge,
an open one) is the construction of a well-defined volume form for
Lorentz-Finsler manifolds, to be determined from the Finsler metric alone -
and which should reduce to the Riemannian volume form in the particular case
of Riemannian spacetime metrics. The extension of the classical notions of
volume for positive definite Finsler metrics, i.e., the Busemann-Hausdorff
and the Holmes-Thompson ones, \cite{Shen1}, to Lorentzian signature is far
from trivial, for at least two reasons:

- The first problem is that the definitions of these volume forms both
involve the integration of some function on the Finslerian unit balls (or,
equivalently, on the indicatrices) of the given metric. In the case of
positive definite Finsler metrics, these unit balls are compact, leading to
finite integrals and therefore, to well-defined notions, but, in Lorentzian
signature, they become non-compact (just an example: even in the case of the
Minkowski metric $\eta =diag(1,-1,-1,-1)$ on $\mathbb{R}^{4}$, the closed
"unit ball"\ $\{(y^{i})\in \mathbb{R}^{4}~|~\eta _{ij}y^{i}y^{j}\leq 1\}$
is, actually, the interior of a hyperboloid), thus leading to infinite
values of the respective integrals.

- Another serious issue is that, for most of the Lorentzian Finsler
functions used in applications, such as Randers, $m$-th root or Bogoslovsky
ones, there exist entire directions in each tangent space along which the
metric tensor cannot be defined or is degenerate (this issue is even
mentioned in \cite{Skakala-bimetric} as an impediment to building physical
theories based on Finsler metrics).

\bigskip

The paper proposes extensions of the notions of Busemann-Hausdorff and
Holmes-Thompson volume to time orientable Finslerian spacetime manifolds,
meant to also make sense in cases when the corresponding metric tensor is
rather ill-behaved.

The technique is the following. We look for a positive definite Riemannian
metric tensor to be canonically attached to a given Lorentz-Finsler metric
tensor $g$ on a manifold $M$; if such a metric can be found, each of its
closed unit balls $E_{x},$ $x\in M,$ is an ellipsoid (hence, compact) and
can be used, instead of the Finslerian unit balls, in Busemann-Hausdorff and
Holmes-Thompson-type constructions.

This positive definite Riemannian metric is determined as follows:

\textit{Step 1.}\ Each time orientation on $M$ (regarded, as in \cite%
{Torrome}, \cite{Minguzzi}, as a section $x\mapsto t_{x}$ of the tangent
bundle\footnote{%
In the literature, there exist two nonequivalent notions of Finslerian time
orientation, referring either to sections of the pullback bundle$\ \pi
^{\ast }TM,$ \cite{Barletta}, or to sections of $TM,$ \cite{Torrome}, \cite%
{Minguzzi}. Here, it was advantageous to use the latter.} $(TM,\pi ,M)$)
gives rise to a Riemannian spacetime metric $g^{t}$ on $M,$ by the rule $%
g_{x}^{t}:=g(x,t_{x}),$ $\forall x\in M$. Further, using the time
orientation $t$, we can attach to $g^{t}$ a positive definite Riemannian
metric $g^{t,+};$ this is possible using a known trick in general
relativity, \cite{Chocquet-Bruhat}.

\textit{Step 2. }Since the time orientation $t$ is generally not unique, we
can find multiple positive definite metrics $g^{t,+}$ and, accordingly, to
non-unique outcomes for the obtained Busemann-Hausdorff and Holmes-Thompson
type expressions - which is, of course, unacceptable. In order to solve this
ambiguity, we will try to pick a \textit{privileged }time orientation $t_{0}$%
, which provides a minimal critical value $t_{0}$ for the functional 
\begin{equation*}
t\mapsto S_{D}(t)=\underset{D}{\int }\sqrt{\det g^{t,+}(x)}d^{n}x,
\end{equation*}%
providing the $g^{t,+}$-Riemannian volume of an arbitrary compact domain $%
D\subset M.$

Under the assumption that at least a privileged time orientation $t_{0}$
exists on $M$, then the Riemannian volume form $\sqrt{\det g^{t_{0},+}(x)}%
d^{n}x$ does not depend on the choice of the privileged time orientation $%
t_{0}$ and represents a well-defined volume form on $M,$ which we call the 
\textit{minimal Riemannian }volume form. This volume form can actually be
obtained by means of a Busemann-Hausdorff type procedure, in which the role
of the Finslerian unit balls is taken by the Riemannian unit balls $%
E_{x}^{t_{0}}.$

If, moreover, the determinant $\det g$ is smooth and nonzero on the entire
slit tangent bundle, then a Holmes-Thompson-type volume form is also
uniquely defined using a privileged time orientation.

Just as in the positive definite case, the Holmes-Thompson volume is tightly
connected to a certain volume form on the tangent bundle. Thus, it allows
one to naturally define field-theoretical integrals also in the case when
the fields under discussion depend on the fiber coordinates on $TM$; an
alternative construction using the minimal Riemannian volume is also briefly
presented.

In the particular case of Riemannian metrics, all time orientations are
privileged ones and both the above volume forms reduce to the usual
Riemannian one.

\bigskip

The paper is structured as follows. In Section 2, we present some
preliminary notions and results. In Section 3.1, we discuss the notion of
time orientation on the base manifold $M$ and, for each time orientation $t$%
, we construct a positive definite Riemannian metric $g^{t,+}$ from the
initial Lorentz-Finsler metric. In Section 3.2, we introduce the notion of
privileged time orientation. Section 4 is devoted to the introduction of the
minimal Riemannian and of the Holmes-Thompson type\textit{\ }volume forms.
Finally, in the last section, we present three examples: smooth metrics
obtained as linearized Finslerian perturbations of the Minkowski metric $%
diag(1,-1,-1,-1)$ on $\mathbb{R}^{4}$, a non-smooth metric (Berwald-Moor
metric) for which it is still possible to define both volume forms and,
finally, a Bogoslovsky-type metric, for which we can only determine the
minimal Riemannian volume form.

\section{Preliminaries}

\textbf{1. Pseudo-Finsler \textbf{and Finsler }spacetimes}

At present, there exist several different definitions of Finslerian
spacetimes; a recent review thereof is given, e.g., in \cite{Minguzzi}. A
part of them are based on a definition by Asanov, \cite{Asanov}, and rely on
a 1-homogeneous Finslerian fundamental function (norm) $F,$ while the others
are relaxed versions of a definition by Beem, \cite{Beem}, based on a
2-homogeneous function $L.$ In the following, we will prefer the latter,
which give a Finslerian generalization $ds^{2}=L(x,dx)$ of the notion of
relativistic interval and allow one to naturally introduce the notions of
lightlike or spacelike vectors on the base manifold $M$.

The definition of pseudo-Finsler spaces we will present below is the one in 
\cite{Bejancu} and includes all the usual examples of Finslerian spacetime
metrics.

\bigskip

Let $M$ be a connected, orientable, $\mathcal{C}^{\mathcal{\infty }}$-smooth
manifold of dimension $n$ and $(TM,\pi ,M),$ its tangent bundle. The set of
sections of any fibered manifold $E$ over $M$ will be denoted by $\Gamma (E)$
and the set of $\mathcal{C}^{\infty }$-smooth functions on $E,$ by $\mathcal{%
F}(E).$ By $TM^{o},$ we will mean the slit tangent bundle $TM\backslash
\{0\}.$

We denote by $(x^{i})_{i=\overline{0,n-1}}$ the coordinates of a point $x\in
M$ in a local chart $(U,\varphi ).$ Each choice of a basis $\{b_{i}\}$ on $%
T_{x}M$ gives rise to the coordinate $n$-uple $(y^{i})$ for any vector $y\in
T_{x}M$ (the basis $\{b_{i}\}$ can be the natural one $\{\partial /\partial
x^{i}\},$ but this is not necessary, \cite{Bao}, p. 3.). This way, we
obtain, for a point $(x,y)\in \pi ^{-1}(U)\subset TM,$ the coordinates $%
(x^{i},y^{i})_{i=\overline{0,n-1}}.$ Whenever possible, we will make no
distinction between $(x,y)\in TM$ and its coordinates $(x^{i},y^{i})\in 
\mathbb{R}^{2n}.$

In the following, we will only admit positively oriented bases $\{b_{i}\}$
and orientation-preserving coordinate changes $\left( x^{i}\right) \mapsto
(x^{i^{\prime }})$ on $M$, i.e., coordinate changes with $\det (\dfrac{%
\partial x^{i^{\prime }}}{\partial x^{j}})>0.$ We denote by $\{\theta ^{i}\}$
the elements of the dual basis to $\{b_{i}\}.$

\bigskip\ 

Consider a non-empty open submanifold$\ $ $A\subset TM,$ with $\pi (A)=M$
and $0\not\in A.$ We assume that each $A_{x}:=T_{x}M\cap A,$ $x\in M,$ is a
positive conic set, i.e., $\forall \alpha >0,$ $\forall y\in A_{x}:\alpha
y\in A_{x}.$ The set $A$ has the structure of a fibered manifold over $M;$
elements $y\in A_{x}$ are called \textit{admissible vectors}.

\bigskip

Fix a natural number $0\leq q<n.$ A smooth function $L:A\in \mathbb{R}$ is
said to define a \textit{pseudo-Finsler structure} on $M$, if, in any
induced local chart $(\pi ^{-1}(U),\varphi ^{\ast })$ on $TM$ and at any
point $(x,y)\in A\cap \pi ^{-1}(U):$

1)$\ L(x,\alpha y)=\alpha ^{2}L(x,y),$ $\forall \alpha >0$;

2)\ $g_{ij}:=\dfrac{1}{2}\dfrac{\partial ^{2}L}{\partial y^{i}\partial y^{j}}
$ are the components of a quadratic form with $q$ negative eigenvalues and $%
n-q$ positive eigenvalues.

\bigskip

The \textit{Finslerian energy }$L$ can always be prolonged by continuity as $%
0$ at $y=0.$

In particular, if $q=0,$ then the Finsler structure $(M,L)$ is called 
\textit{positive definite.} If $q=n-1,$ then $(M,L)$ is called a \textit{%
Lorentz-Finsler space} or a \textit{Finsler spacetime}. If $A=TM^{o},$ then $%
(M,L)$ is called \textit{smooth. }Usually, by a \textit{Finsler structure},
one automatically understands a smooth, positive definite one, e.g., \cite%
{Bao} - but, here, we will specify this explicitly each time. $(M,L)$ is (%
\textit{pseudo)-Riemannian}, if, in any local chart, $g_{ij}=g_{ij}(x)$ and 
\textit{locally Minkowskian }if around any point of $A,$ there exists a
local chart in which $g_{ij}=g_{ij}(y)$ only.

\bigskip

The arc length of a curve $c:t\in \lbrack a,b]\mapsto (x^{i}(t))$ on $M$ is
calculated as $l(c)=\underset{a}{\overset{b}{\int }}F(x(t),\dot{x}(t))dt,$
where the Finslerian norm $F:A\rightarrow \mathbb{R}$ is given by:%
\begin{equation}
F=\sqrt{\left\vert L\right\vert }.  \label{pos_def_norm}
\end{equation}

The Finslerian metric tensor $g$ can be regarded as a mapping $%
g:A\rightarrow T^{\ast }M\otimes T^{\ast }M$. More precisely, let us fix a
local chart $(U,\varphi )$ on $M$ and $x\in U;$ for each $y=y^{k}b_{k}\in
A_{x},$ we have a symmetric bilinear form $g_{(x,y)}$ on $T_{x}M\simeq 
\mathbb{R}^{n},$ given, in the basis $\{b_{i}\},$ by the matrix $%
g(x,y):=(g_{ij}(x,y))$ i.e.,%
\begin{equation}
g_{(x,y)}(b_{i},b_{j}):=g_{ij}(x,y).  \label{scalar_prod_g}
\end{equation}

\bigskip

Another important quantity in a Finsler space is the \textit{Cartan form }$%
\mathbf{C}=C_{i}(x,y)\theta ^{i},$ with coefficients $C_{i}=\dfrac{1}{2}%
g^{jk}\dfrac{\partial g_{ij}}{\partial y^{k}}\in \mathcal{F}(TM).$ The
coefficients $C_{i}$ are related to $\det (g)$ by:%
\begin{equation}
\dfrac{\partial \sqrt{\det (g)}}{\partial y^{i}}=C_{i}\sqrt{\det (g)}.
\label{Cartan_vector_det}
\end{equation}%
If $(M,L)$ is Riemannian, then $\mathbf{C}$ identically vanishes.

\textbf{Remark. }For smooth, positive definite Finsler metrics, the converse
also holds true, i.e., if $\mathbf{C}$ identically vanishes, then $(M,L)$ is
Riemannian (Deicke's Theorem, \cite{Bao}). Still, for Lorentz-Finsler
metrics, Deicke's Theorem is no longer valid. A counterexample is presented
below, in Section \ref{BM}.

\bigskip

\textbf{2. Volume forms for smooth, positive definite Finsler metrics}

A volume form\textit{\ }$\omega $ on a manifold $M$ is a nowhere zero $n$%
-form on $M:$%
\begin{equation}
\omega =\sigma (x)d^{n}x,  \label{vol_form_M}
\end{equation}%
where $d^{n}x:=dx^{0}\wedge ...\wedge dx^{n-1}.$ With respect to
orientation-preserving coordinate changes $(x^{i})\mapsto (x^{i^{\prime }}),$
the functions $\sigma (x)$ transform as: 
\begin{equation}
\sigma (x)=\det (\dfrac{\partial x^{i^{\prime }}}{\partial x^{j}})\sigma
^{\prime }(x^{\prime }).  \label{vol_transf_M}
\end{equation}

More generally, a volume form can be expressed as a nonzero multiple of the
exterior product $\theta ^{0}\wedge \theta ^{1}\wedge ...\wedge \theta
^{n-1},$ where $\theta ^{i}=\theta ^{i}(x)$ are the elements of an arbitrary
basis of $\Gamma (T^{\ast }M).$ Once a volume form is defined, integrals of
functions on compact domains $D\subset M$ are defined via partitions of
unity.

In particular:\textbf{\ }

- The \textit{Euclidean volume form }on $\mathbb{R}^{n}$ is $d^{n}x.$ The
Euclidean volume of a compact domain $D\subset M$ is denoted by $Vol(D).$

-\ On pseudo-Riemannian manifolds $(M,g),$ the \textit{Riemannian volume form%
} is expressed in an arbitrary basis $\{\theta ^{i}\}$ as, \cite{Fecko},%
\begin{equation}
dV_{g}=\sqrt{\left\vert \det g(x)\right\vert }\theta ^{0}\wedge \theta
^{1}\wedge ...\wedge \theta ^{n-1},  \label{Riemann_vol_element}
\end{equation}%
where $g(x)$ is the matrix of $g$ in the dual basis $\{b_{i}\}$ of $\{\theta
^{i}\}$.

\bigskip

Now, assume that $A=TM^{o}$ and the Finsler structure $(M,L)$ is positive
definite. Fix a local chart $(U,\varphi )$ of $M$ and an arbitrary point $%
x\in U.$ Let $\{b_{i}\}$ be a positively oriented basis of $T_{x}M,$ with
dual $\{\theta ^{i}\}$ and $y=y^{i}b_{i}\in T_{x}M\mapsto (y^{i})\in \mathbb{%
R}^{n},$ the corresponding coordinate isomorphism.

The \textit{closed Finslerian unit ball, }%
\begin{equation}
B_{x}=\{(y^{i})\in \mathbb{R}^{n}~|~F(x,y)\leq 1\}
\label{Finslerian_unit_ball}
\end{equation}%
is a compact, convex subset of $\mathbb{R}^{n}$.

Integrals of homogeneous functions on\textit{\ }a Finslerian unit ball $%
B_{x} $ and its boundary (the \textit{indicatrix})\textit{\ }$\partial B_{x}$
are related, \cite{Crampin-averaging}, as follows. If $f:TM^{o}\rightarrow 
\mathbb{R},$ $(x,y)\mapsto f(x,y)$ is of class $\mathcal{C}^{\infty }$ and
homogeneous of degree $k$ in $y,$ then 
\begin{equation*}
\underset{B_{x}}{\int }f(x,y)d^{n}y:=~\underset{\varepsilon \rightarrow 0}{%
\lim }\underset{\varepsilon \leq F(x,y)\leq 1}{\int }f(x,y)d^{n}y
\end{equation*}%
is well defined and%
\begin{equation}
\underset{\partial B_{x}}{\int }f\lambda =(n+k)\underset{B_{x}}{\int }%
f(x,y)d^{n}y,  \label{sphere_to_ball_rel}
\end{equation}%
where $\lambda $ denotes the Euclidean volume form on $\partial B_{x}.$

\bigskip

The \textbf{Busemann-Hausdorff volume form} of $(M,L)$ is given, in the
basis $\{\theta ^{i}\}$ of $\Gamma (T^{\ast }M)$, \cite{Shen1}, as: 
\begin{equation}
dV_{BH}=\sigma _{BH}(x)\theta ^{0}\wedge ...\theta ^{n-1},~\ \ \sigma
_{BH}(x)=\dfrac{Vol(\mathbb{B})}{Vol(B_{x})},  \label{BH_volume_form}
\end{equation}%
where $\mathbb{B}$ denotes the Euclidean unit ball in $\mathbb{R}^{n}.$

In the case when $L$ is reversible, i.e., $L(x,y)=L(x,-y),$ $\forall
(x,y)\in TM^{o},$ $dV_{BH}$ gives the Hausdorff measure of the distance
function induced by $F=\sqrt{L}.$

\bigskip

The \textbf{Holmes-Thompson volume form }of $(M,L)$ is, \cite{Shen1}:%
\begin{equation}
dV_{HT}=\sigma _{HT}(x)\theta ^{0}\wedge ...\theta ^{n-1},~\ \ \sigma
_{HT}(x)=\dfrac{1}{Vol(\mathbb{B})}\underset{B_{x}}{\int }\det g(x,y)d^{n}y,
\label{HT_volume_form}
\end{equation}%
with $d^{n}y:=dy^{0}\wedge dy^{1}\wedge ...\wedge dy^{n-1}.$

\bigskip

\textbf{Particular case: }If\textbf{\ }$(M,g)\ $is Riemannian, then: $%
dV_{BH}=dV_{HT}=dV_{g}.$

\bigskip

\textbf{Remark.} The fact that, for smooth, positive definite Finsler
metrics, the unit balls $B_{x},$ $x\in M,$ are compact, is essential for
both the above definitions. Also, (\ref{HT_volume_form}) uses the fact that $%
\det (g)$ is defined (and smooth) on the entire $TM^{o}.$

\section{Time orientable Finsler spacetimes}

\subsection{Time orientations and osculating Riemannian metrics}

Assume, in the following, that $(M,L)$ is a Lorentz-Finsler manifold. An
admissible tangent vector $y\in A_{x}$ at some point $x\in M$ is called, 
\cite{Torrome}, \cite{Minguzzi}: a)\ \textit{timelike, }if $L(x,y)>0;$ b)\ 
\textit{lightlike, }if $L(x,y)=0$ and c) \textit{spacelike}, if $L(x,y)<0.$

If $y\in A_{x}$ is timelike, then $L(x,y)=F^{2}(x,y).$

\begin{definition}
\cite{Torrome}: A \textit{time orientation} on $M$ is a smooth vector field $%
t\in \Gamma (A),$ which is everywhere timelike.
\end{definition}

If the Finsler spacetime $(M,L)$ admits a time orientation, then it is
called \textit{time orientable}.

In the following, we will always assume that $(M,L)$ is time orientable%
\footnote{%
The vast majority of "reasonable"\ physical models are based on time
orientable spacetimes. The case of non-time-orientable spacetimes can be
dealt with, e.g., by passing to a double covering space, \cite{Hawking}.}.

Consider the subset of $A$ consisting of timelike vectors:%
\begin{equation}
A^{+}:=A\cap L^{-1}(0,\infty ).  \label{A_plus}
\end{equation}%
The open subset $A^{+}\subset A$ is a submanifold of $A$. Moreover, since $%
(M,L)\ $is time orientable, there exists at every $x\in M,$ at least a
timelike vector, i.e., $\pi (A^{+})=M.$ Consequently, $A^{+}$ has the
structure of a fibered manifold over $M$. Time orientations $t$ can thus be
regarded as (smooth) sections of $A^{+}.$

\bigskip

Let $t\in \Gamma (A^{+}),$ $x\mapsto t_{x}$ denote an arbitrary time
orientation on $M.$ Then, the mapping $x\in M\mapsto g_{x}^{t}\in T^{\ast
}M\otimes T^{\ast }M,$ given by: 
\begin{equation}
g_{x}^{t}:=g_{(x,t_{x})},~\ \ \forall x\in M,  \label{osculating_g}
\end{equation}%
defines a pseudo-Riemannian metric $g^{t}$ on $M$, called, \cite{Stavrinos},
an \textit{osculating Riemannian metric}\footnote{%
The term \textit{osculating Riemannian metric} is also used in the Finsler
literature (e.g., \cite{Crampin-averaging}) with a different meaning, i.e.,
a Riemannian metric obtained by an averaging technique. Here, we do \textit{%
not }have in mind this meaning (averaging techniques are not even available
until a volume form is defined).} of the Lorentz-Finsler metric $g.$

\bigskip

\textbf{Remark. }The fact that $t$ is everywhere admissible ensures that $%
g_{x}^{t}(v,w)$ is well defined for any vectors $v,w\in T_{x}M$ and the
dependence $x\mapsto g_{x}^{t}$ is smooth (even if $A\varsubsetneq TM^{o},$
i.e., if the initial Finsler metric $g=g(x,y)$ is ill-behaved along certain
directions $y\in T_{x}M$). That is, $g^{t}$ is, indeed, a well-defined
pseudo-Riemannian metric.

\bigskip

Now, fix a time orientation $t\in \Gamma (A^{+}).$\textbf{\ }The osculating
Riemannian metric $g^{t},$ $t\in \Gamma (A)$ has Lorentzian signature $%
(+,-,-,...,-)$. Following the model in \cite{Chocquet-Bruhat} (Remark 2.4,
Ch. XII), we define the mapping $g^{t,+}:M\rightarrow T^{\ast }M\otimes
T^{\ast }M,~~x\mapsto g_{x}^{t,+},$ with\footnote{%
The signs in (\ref{g_plus}) differ from the ones in \cite{Chocquet-Bruhat}
due to different metric signature conventions.}: 
\begin{equation}
g_{x}^{t,+}(v,w):=2g_{x}^{t}(t_{x}^{\prime },v)g_{x}^{t}(t_{x}^{\prime
},w)-g_{x}^{t}(v,w),~\ \ \ \ \ \ \ \forall v,w\in T_{x}M,  \label{g_plus}
\end{equation}%
where $t^{\prime }:=\dfrac{t}{F(t)}$. In local writing, we have, at any $%
x\in M:$%
\begin{equation}
g_{ij}^{t,+}=2t_{i}^{\prime }t_{j}^{\prime }-g_{ij}^{t},
\label{g_plus_coords}
\end{equation}%
where $t_{i}^{\prime }=g_{ij}^{t}t^{\prime j}.$

\begin{proposition}
i) Given any time orientation $t\in \Gamma (A^{+}),$ the metric $g^{t,+}$ is
a positive definite Riemannian metric on $M$.

ii) Corresponding to any local chart, there holds the equality:%
\begin{equation}
\det (g^{t,+})=\left\vert \det (g^{t})\right\vert .  \label{determinants}
\end{equation}
\end{proposition}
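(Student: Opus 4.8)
The plan is to reduce everything to pointwise linear algebra on the Lorentzian vector space $(T_xM,g^t_x)$, the only genuinely Finslerian ingredient being the normalization of $t'$. First I would record that $t'=t/F(t)$ is a $g^t$-unit timelike vector. Indeed, since $L$ is $2$-homogeneous in $y$, Euler's theorem gives the identity $g_{ij}(x,y)y^iy^j=L(x,y)$; applying it to $y=t_x$ and using that $t$ is timelike (so $L(x,t_x)=F^2(t)$) yields $g^t_x(t_x,t_x)=F^2(t)$, whence
\begin{equation*}
g^t_x(t'_x,t'_x)=\frac{g^t_x(t_x,t_x)}{F^2(t)}=1 .
\end{equation*}
This is the single identity on which both parts rest.

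For part (i), I would work at a fixed $x$ in a $g^t$-orthonormal basis adapted to $t'$. Because $g^t_x$ has signature $(+,-,\dots,-)$ and $t'$ spans a positive-definite line, Sylvester's law forces the restriction of $g^t_x$ to $(t')^{\perp}$ to be negative definite; hence there is a basis $e_0=t'_x,e_1,\dots,e_{n-1}$ with $g^t_x(e_0,e_0)=1$, $g^t_x(e_a,e_a)=-1$ and $g^t_x(e_0,e_a)=0$. Writing $v=v^0e_0+v^ae_a$ we have $g^t_x(t'_x,v)=v^0$ and $g^t_x(v,v)=(v^0)^2-\sum_a(v^a)^2$, so the defining formula (\ref{g_plus}) gives
\begin{equation*}
g^{t,+}_x(v,v)=2(v^0)^2-\big((v^0)^2-\sum_a(v^a)^2\big)=(v^0)^2+\sum_a(v^a)^2,
\end{equation*}
which is strictly positive for $v\neq0$. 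Symmetry is immediate from (\ref{g_plus}), and smoothness of $x\mapsto g^{t,+}_x$ follows from that of $g^t$ and of $t$ together with $F(t)>0$. Thus $g^{t,+}$ is a smooth positive definite Riemannian metric.

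For part (ii), I would express the determinants in an arbitrary coordinate basis, where (\ref{g_plus_coords}) reads $G^+=2\tau\tau^{\top}-G$ with $G$ the matrix of $g^t$ and $\tau=Gt'$ the covector $(t'_i)$. Factoring out $G$ gives $G^+=G\,(2\,t'(Gt')^{\top}-I)$, so that $\det G^+=\det G\cdot\det(2\,t'(Gt')^{\top}-I)$. The second factor is the determinant of $-I$ plus a rank-one matrix; the matrix-determinant lemma, together with ${t'}^{\top}Gt'=g^t_x(t'_x,t'_x)=1$, gives
\begin{equation*}
\det\big(2\,t'(Gt')^{\top}-I\big)=(-1)^n\big(1-2\,{t'}^{\top}Gt'\big)=(-1)^n(1-2)=(-1)^{n-1}.
\end{equation*}
Finally, the signature $(+,-,\dots,-)$ fixes the sign of $\det(g^t)$ to be $(-1)^{n-1}$, i.e. $\det(g^t)=(-1)^{n-1}|\det(g^t)|$. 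Combining, $\det(g^{t,+})=(-1)^{n-1}\det(g^t)=|\det(g^t)|$, as claimed. Equivalently, one may evaluate both determinants in the orthonormal basis above, where they equal $1$ and $(-1)^{n-1}$, and observe that passing to the coordinate basis multiplies both by the same factor $(\det P)^2$.

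The computations are routine; the only points needing care are the normalization $g^t(t',t')=1$, which is exactly where $2$-homogeneity and the timelike character of $t$ enter, and the consistent bookkeeping of the sign $(-1)^{n-1}$ coming from the $n-1$ negative eigenvalues of $g^t$. I do not anticipate a genuine obstacle: once $t'$ is seen to be a unit timelike vector, part (i) is the statement that reflecting the metric across a unit timelike direction produces a Euclidean inner product, and part (ii) is the corresponding determinant bookkeeping.
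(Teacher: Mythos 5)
Your proof is correct and follows essentially the same route as the paper: part (i) via the computation in a $g^t$-orthonormal basis adapted to $t'$, and part (ii) via the determinant of a rank-one perturbation (the paper invokes the identity $\det(Q_{jk}+C_jC_k)=(1+Q^{hl}C_hC_l)\det(Q_{jk})$ with $C_j=i\sqrt{2}\,t'_j$, which is the same matrix-determinant lemma you use). Your explicit justification of $g^t(t',t')=1$ via Euler's theorem and of the sign $\det(g^t)=(-1)^{n-1}\left\vert\det(g^t)\right\vert$ only makes explicit steps the paper leaves implicit.
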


\begin{proof}
\textit{i)}\ Each $g_{x}^{t,+},$ $x\in M,$ is a symmetric bilinear form on $%
T_{x}M$ and the dependence $x\mapsto g_{x}^{t,+}$ is smooth, i.e., $g^{t,+}$
is a pseudo-Riemannian metric.

Let us also check that each $g_{x}^{t,+}$ is positive definite. Fix an
arbitrary $x\in M$ and pick a $g_{x}^{t}$-orthonormal basis $\{\hat{e}%
_{i}\}_{i=\overline{0,n-1}}$ on $T_{x}M$, with $\hat{e}_{0}=t^{\prime };$
that is, $g_{x}^{t}(\hat{e}_{i},\hat{e}_{j})=\eta _{ij},$ where$\ \eta
=diag(1,-1,-1,...,-1)$. For any $v=v^{i}\hat{e}_{i}\in T_{x}M,$ we have: $%
g_{x}^{t}(t^{\prime },v)=g_{x}^{t}(\hat{e}_{0},v)=v^{0},$ $g^{t}(v,v)=\left(
v^{0}\right) ^{2}-\left( v^{1}\right) ^{2}-...-\left( v^{n-1}\right) ^{2}$
and: 
\begin{equation*}
g_{x}^{t,+}(v,v)=2[g_{x}^{t}(t_{x}^{\prime },v)]^{2}-g_{x}^{t}(v,v)=\left(
v^{0}\right) ^{2}+\left( v^{1}\right) ^{2}+...+\left( v^{n-1}\right)
^{2}\geq 0.
\end{equation*}%
Moreover, $g_{x}^{t,+}(v,v)=0$ if and only if $v^{i}=0,$ $i=\overline{0,n-1}%
, $ i.e., $v=0.$

\textit{ii)} We will use the following result, \cite{Bao}, p. 287: If $%
(Q_{ij})$ is a nonsingular $n\times n$ complex matrix with inverse $(Q^{ij})$
and $C_{j}\in \mathbb{C},$ $j=\overline{0,n-1},$ then:%
\begin{equation}
\det (Q_{jk}+C_{j}C_{k})=(1+Q^{hl}C_{h}C_{l})\det (Q_{jk}).
\label{matrix_result}
\end{equation}%
Fix an arbitrary point $x\in M,$ a local chart around $x$ and a basis $%
\{b_{i}\}$ of $T_{x}M$. Set: $Q_{jk}=g_{jk}^{t}(x),$ $C_{j}=i\sqrt{2}%
t_{j}^{\prime }(x)\in \mathbb{C}.$ Taking into account (\ref{g_plus_coords}%
), we have, at $x$: 
\begin{equation*}
\det (g^{t,+})=(-1)^{n}\det (g_{ij}^{t}-2t_{i}^{\prime }t_{j}^{\prime
})=(-1)^{n}(1-2(g^{t})^{ij}t_{i}^{\prime }t_{j}^{\prime })\det (g^{t})
\end{equation*}%
With $(g^{t})^{ij}t_{i}^{\prime }t_{j}^{\prime }=L(t^{\prime })=1,$ we get $%
\det (g^{t,+})=(-1)^{n+1}\det (g^{t})=\left\vert \det (g^{t})\right\vert $.
\end{proof}

\bigskip

Fix $x\in M$, a basis $\{b_{i}\}$ of $T_{x}M$ and denote by $(y^{i})$ the
coordinates of $y\in T_{x}M$ in this basis. If $t$ is a time orientation on $%
M,$ the closed unit ball of $g^{t,+}$ at $x$ is the ellipsoid 
\begin{equation}
E_{x}^{t}=\{(y^{i})\in \mathbb{R}^{n}~|~g_{x}^{t,+}(y,y)\leq 1\}.
\label{osc_metric_ellipsoid}
\end{equation}

The ellipsoid (\ref{osc_metric_ellipsoid}) is the image of the Euclidean
unit ball $\mathbb{B}=\{(u^{i^{\prime }})\in \mathbb{R}^{n}~|~\delta
_{i^{\prime }j^{\prime }}u^{i^{\prime }}u^{j^{\prime }}\leq 1\}$ through a
linear transformation $\varphi :\mathbb{R}^{n}\rightarrow \mathbb{R}%
^{n},(u^{i^{\prime }})\mapsto (y^{i}),$ given by:%
\begin{equation}
y^{i}=a_{j^{\prime }}^{i}u^{j^{\prime }},  \label{linear_coord_change}
\end{equation}%
with Jacobian determinant:%
\begin{equation}
\det (a_{j^{\prime }}^{i})=[\det g^{t,+}(x)]^{-1/2}>0.
\label{Jacobian_lin_transf}
\end{equation}%
(The matrix $(a_{j^{\prime }}^{i})$ is determined as the matrix of change of
basis from the initial basis $\{b_{i}\}$ to a positively oriented, $g^{t,+}$%
-orthonormal basis $(e_{i}^{\prime })$ with corresponding coordinates
denoted by $(u^{i^{\prime }})\in \mathbb{R}^{n}$).

As a consequence, Euclidean volume $Vol(E_{x}^{t})=\underset{E_{x}^{t}}{\int 
}d^{n}y$ is:%
\begin{equation}
Vol(E_{x}^{t})=\dfrac{Vol(\mathbb{B})}{\sqrt{\det g^{t,+}(x)}}=\dfrac{Vol(%
\mathbb{B})}{\sqrt{\left\vert \det g^{t}(x)\right\vert }}.  \label{volume_Et}
\end{equation}

\subsection{Privileged time orientation}

The time orientation of a given Lorentz-Finsler manifold is, generally, far
from unique - and different time orientations $t\in \Gamma (A^{+})$ give
rise to different Riemannian volume forms $dV_{g^{t}}$. In the following, we
will try to pick a time orientation which, roughly speaking, \textit{%
minimizes} the Riemannian volume of an arbitrary compact domain $D\subset X$.

\bigskip

Fix an arbitrary compact domain $D\subset M.$ The functional $S_{D}:\Gamma
(A^{+})\rightarrow \mathbb{R},$ defined by:%
\begin{equation}
S_{D}(t):=\underset{D}{\int }dV_{g^{t,+}}=~\underset{D}{\int }\sqrt{%
\left\vert \det g_{x}^{t}\right\vert }d^{n}x,  \label{Finsler_Riemann_vol}
\end{equation}%
where $g_{ij}^{t}(x)=g_{x}^{t}(\dfrac{\partial }{\partial x^{i}},\dfrac{%
\partial }{\partial x^{j}})$, is invariant to arbitrary coordinate changes
on $M.$ Also, $S_{D}$ admits an infimum (as $S_{D}(t)>0,\forall t\in \Gamma
(A^{+})$).

A global minimum for $S_{D}$ on the (open) set $A^{+}$ is not guaranteed to
exist, as $S_{D}$ might decrease to its infimum as we approach, e.g., a
lightlike direction (see, e.g., the example in Subsection \ref%
{Bogoslovsky_metric}). This is why, in order to increase our chances of
obtaining well-defined volume forms on $(M,L),$ we will relax the minimality
request, as follows.

\begin{definition}
\label{privileged}We call a privileged time orientation on $(M,L)$, any time
orientation $t_{0}\in \Gamma (A^{+})$, such that $S_{D}(t_{0})$ is a minimum
of the set of critical values of the functional $S_{D}$ in (\ref%
{Finsler_Riemann_vol}), for any compact domain $D\subset M$.
\end{definition}

\textbf{Particular case }\textit{(Riemannian spacetimes)}\textbf{:} If $%
g=g(x)$ only, then the mapping $t\mapsto S_{D}(t)$ is, in fact, constant. In
other words, \textit{in time orientable Riemannian spacetimes, all time
orientations are privileged ones.}

\bigskip

Let us determine the Euler-Lagrange equations for $S_{D}.$ As the Lagrangian
density $\mathcal{L}:=\sqrt{\left\vert \det g(x,t)\right\vert }$ in (\ref%
{Finsler_Riemann_vol}) is of order zero in $t$, critical points of $S_{D}$
are given by:%
\begin{equation}
\dfrac{\partial }{\partial t^{i}}\sqrt{\left\vert \det g(x,t)\right\vert }%
=0,~\ \ \forall x\in M.  \label{zero_deriv_gt}
\end{equation}%
Using (\ref{Cartan_vector_det}) and the fact that $\det g(x,t)\not=0,$ the
above is equivalent to%
\begin{equation}
C_{i}(x,t)=0.  \label{Zero_Cartan_coords}
\end{equation}%
This can be reformulated as:

\begin{proposition}
If $t_{0}\in \Gamma (A^{+})$ is a critical point of the functional $S_{D}$,
then it is a zero of the Cartan form: 
\begin{equation}
\mathbf{C}(x,t_{0}(x))=0,~\ \forall x\in M.  \label{zero_Cartan_coord_free}
\end{equation}
\end{proposition}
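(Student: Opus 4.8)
The plan is to recognize that, because the Lagrangian density $\mathcal{L}=\sqrt{\left\vert \det g(x,t)\right\vert }$ in (\ref{Finsler_Riemann_vol}) contains the section $t$ only through its pointwise values $t(x)$ and none of its derivatives, the variational problem collapses to a purely algebraic, pointwise condition; the proposition is then just the coordinate-free repackaging of that condition. Concretely, I would first compute the first variation of $S_{D}$ along an arbitrary compactly supported variation $\delta t$ of $t_{0}$ (admissible for small parameter since $A^{+}$ is open), obtaining $\delta S_{D}=\int_{D}\frac{\partial \mathcal{L}}{\partial t^{i}}(x,t_{0}(x))\,\delta t^{i}(x)\,d^{n}x$ with no boundary terms. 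Requiring this to vanish for every such $\delta t$ and, as in Definition \ref{privileged}, for every compact $D\subset M$, the fundamental lemma of the calculus of variations yields the pointwise Euler-Lagrange equation (\ref{zero_deriv_gt}) at every $x\in M$.

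Next I would convert (\ref{zero_deriv_gt}) into a statement about the Cartan coefficients via the identity (\ref{Cartan_vector_det}). The one point requiring care is the absolute value: (\ref{Cartan_vector_det}) is stated for $\sqrt{\det g}$, whereas here the Lorentzian signature forces $\det g$ to carry the sign $(-1)^{n-1}$. Since condition (2) in the definition of the pseudo-Finsler structure fixes the signature of $g$ throughout $A$, this sign is constant on all of $A$, so $\left\vert \det g\right\vert =\varepsilon \det g$ with $\varepsilon \in \{\pm 1\}$ fixed; pulling the constant $\varepsilon $ out of the square root and differentiating shows that (\ref{Cartan_vector_det}) transfers verbatim to $\sqrt{\left\vert \det g\right\vert }$, namely $\partial \sqrt{\left\vert \det g\right\vert }/\partial t^{i}=C_{i}\sqrt{\left\vert \det g\right\vert }$. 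Because $t_{0}$ is timelike the metric $g(x,t_{0})$ is nondegenerate, so $\det g(x,t_{0})\neq 0$ and $\sqrt{\left\vert \det g(x,t_{0})\right\vert }\neq 0$; dividing it out turns (\ref{zero_deriv_gt}) into the componentwise relation (\ref{Zero_Cartan_coords}), i.e. $C_{i}(x,t_{0}(x))=0$.

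Finally I would observe that the passage from (\ref{Zero_Cartan_coords}) to the coordinate-free assertion (\ref{zero_Cartan_coord_free}) is immediate: $\mathbf{C}=C_{i}\theta ^{i}$ is the trace of the Cartan tensor and hence a genuine $1$-form, so the vanishing of all its components $C_{i}(x,t_{0}(x))$ relative to the dual basis $\{\theta ^{i}\}$ is equivalent to $\mathbf{C}(x,t_{0}(x))=0$, independently of the chosen basis. I expect the only genuinely delicate step to be the absolute-value bookkeeping in the second paragraph --- verifying that the constancy of the sign of $\det g$ on $A$ really does let one reuse (\ref{Cartan_vector_det}) without extra terms --- while the variational reduction and the final reassembly are routine.
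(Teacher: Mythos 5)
Your proposal is correct and follows essentially the same route as the paper: the order-zero Lagrangian reduces the Euler--Lagrange equations to the pointwise condition (\ref{zero_deriv_gt}), which the identity (\ref{Cartan_vector_det}) together with $\det g(x,t_{0})\neq 0$ converts into $C_{i}(x,t_{0})=0$. Your extra care with the absolute value (checking that the constant sign of $\det g$ on $A$ lets (\ref{Cartan_vector_det}) carry over to $\sqrt{\left\vert \det g\right\vert }$) is a detail the paper passes over silently, and it checks out.
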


\section{Volume forms}

\subsection{Minimal Riemannian volume form}

Assume, in the following, that the Lorentz-Finsler manifold $(M,L)$ admits
at least a privileged time orientation $t_{0}.$

Let us start with the following remark. If $t_{0}$ and $t_{0}^{\prime }$ are
two privileged time orientations, then, for any compact $D\subset M,$ they
provide the same (minimal critical) value for $S_{D}.$ In other words, the
values $S_{D}(t_{0})$ and $S_{D}(t_{0}^{\prime })$ have to coincide for 
\textit{all }compact domains $D\subset M.$ As a consequence, the
corresponding Lagrangian densities (which are smooth functions) have to
coincide pointwise, i.e., for any $x\in M$ and in any local chart around $x$%
, 
\begin{equation}
\sqrt{\left\vert \det g^{t_{0}}(x)\right\vert }=\sqrt{\left\vert \det
g^{t_{0}^{\prime }}(x)\right\vert }.  \label{equal_dets}
\end{equation}%
That is, 
\begin{equation}
dV_{g^{t_{0}}}=dV_{g^{t_{0}^{\prime }}}.  \label{volume_forms_equality1}
\end{equation}%
It makes thus sense

\begin{definition}
We call the \textbf{minimal Riemannian volume form }on $(M,L),$ the
differential form:%
\begin{equation}
dV_{bh}=dV_{g^{t_{0}}},  \label{minimal_R_volume}
\end{equation}%
where $t_{0}$ is any privileged time orientation for $(M,L).$
\end{definition}

Relation (\ref{volume_forms_equality1}) ensures that the above definition
does not depend on the choice of the privileged time orientation $t_{0}.$

\bigskip

\textbf{Particular case.} If $(M,g)$ is Riemannian, then $dV_{bh}$ coincides
with the Riemannian volume form $dV_{g}.$

\bigskip

The minimal Riemannian volume form can be also obtained via a
Busemann-Hausdorff type construction, as follows. Let us denote by $%
\{b_{i}\} $ a positively oriented basis of $\Gamma (TM)$ and by $\{\theta
^{i}\},$ the dual basis of $\Gamma (T^{\ast }M).$ In this basis, the minimal
Riemannian volume element is expressed as $dV_{bh}=\sqrt{\left\vert \det
g^{t_{0}}(x)\right\vert }\theta ^{0}\wedge ...\wedge \theta ^{n-1}.$ Using (%
\ref{volume_Et}), we have%
\begin{equation}
\sqrt{\left\vert \det g^{t}(x)\right\vert }=\dfrac{Vol(\mathbb{B})}{%
Vol(E_{x}^{t})},
\end{equation}%
which leads to:%
\begin{equation}
dV_{bh}=\sigma _{bh}(x)\theta ^{0}\wedge \theta ^{1}\wedge ...\wedge \theta
^{n-1},~~\ \ \ \sigma _{bh}(x)=\dfrac{Vol(\mathbb{B})}{Vol(E_{x}^{t_{0}})}.
\label{Busemann-Hausdorff_expr}
\end{equation}

Equation (\ref{Zero_Cartan_coords}), together with (\ref{volume_Et}) tell us
that, at each point $x$ of the base manifold, and in any local chart around $%
x$, the volume $Vol(E_{x}^{t_{0}})$ is a critical value of the real-valued
mapping $t_{x}\mapsto Vol(E_{x}^{t})$.

\subsection{Holmes-Thompson volume form}

Consider a privileged time orientation $t_{0}\in \Gamma (A^{+})$ on $M$ and
denote by $E_{x}^{t_{0}}$ the corresponding Riemannian unit ball at $x\in M.$
We assume, in the following, that the function $\det g=\det g(x,y)$ (where,
by $g_{ij}(x,y)$, we mean $g_{(x,y)}(b_{i},b_{j})$) can be continuously
prolonged to $TM^{o}$ and the prolongation, also denoted by $\det g$,\ is
smooth and nowhere zero. Under these circumstances, it makes sense:

\begin{definition}
The \textbf{Holmes-Thompson volume form} on the Lorentz-Finsler space $(M,L)$
is the differential form:%
\begin{equation}
dV_{ht}=\sigma _{ht}(x)\theta ^{0}\wedge \theta ^{1}\wedge ...\wedge \theta
^{n-1},~\ \ \sigma _{ht}(x)=\dfrac{1}{Vol(\mathbb{B})}\underset{E_{x}^{t_{0}}%
}{\int }\left\vert \det g(x,y)\right\vert d^{n}y.
\label{HT_volume_Lorentzian}
\end{equation}
\end{definition}

\begin{proposition}
\label{HT_Prop}$dV_{ht}$ is a well-defined volume form on $M.$
\end{proposition}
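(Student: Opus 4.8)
The plan is to check, for the density $\sigma_{ht}$ of (\ref{HT_volume_Lorentzian}), the three properties that make $dV_{ht}$ a volume form: that the defining integral converges, so that $\sigma_{ht}$ is a genuine smooth function of $x$; that $dV_{ht}=\sigma_{ht}\,\theta^{0}\wedge\cdots\wedge\theta^{n-1}$ is independent of the positively oriented basis $\{b_{i}\}$ (so that it defines a global $n$-form satisfying (\ref{vol_transf_M})); and that it is nowhere zero. Independence of the choice of privileged time orientation $t_{0}$ is a fourth, more delicate point, which I treat last since it is the real crux.

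Finiteness and positivity come almost for free from homogeneity. Because $L$ is $2$-homogeneous in $y$, the components $g_{ij}=\tfrac12\partial^{2}L/\partial y^{i}\partial y^{j}$, and hence $\det g(x,\cdot)$, are $0$-homogeneous; by assumption $\det g$ extends to a smooth, nowhere-zero function on $TM^{o}$, so on the sphere of directions it is continuous, bounded, and bounded away from $0$. As $\det g(x,y)$ depends on $y$ only through its direction, it is bounded on the compact ellipsoid $E_{x}^{t_{0}}$ away from the origin (a null set), so the integral in (\ref{HT_volume_Lorentzian}) converges; and since $|\det g|>0$ on a set of positive measure, $\sigma_{ht}(x)>0$ for every $x$, which already yields the nowhere-zero property. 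Smoothness of $x\mapsto\sigma_{ht}(x)$ follows by pulling the integral back to the fixed ball $\mathbb{B}$ via the $x$-dependent linear map (\ref{linear_coord_change}), whose Jacobian is given by (\ref{Jacobian_lin_transf}): the integrand becomes $|\det g(x,a_{x}u)|\,[\det g^{t_{0},+}(x)]^{-1/2}$, smooth and bounded in $x$ on the fixed compact domain, so one may differentiate under the integral sign.

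For basis-independence, let $\{b_{i}\}$ and $\{b_{i'}\}$ be positively oriented bases with change-of-basis matrix $P=(P^{i}_{i'})$, $\det P>0$. Then the metric matrix transforms as $g'=P^{T}gP$, so $\det g$ picks up a factor $(\det P)^{2}$, the ellipsoid $E_{x}^{t_{0}}$ is the same geometric region, and the fiber substitution contributes $(\det P)^{-1}$ from $d^{n}y$; the net effect is $\sigma_{ht}'=\det P\,\sigma_{ht}$. Since the dual cobasis wedge scales inversely, $\theta^{0'}\wedge\cdots\wedge\theta^{(n-1)'}=(\det P)^{-1}\theta^{0}\wedge\cdots\wedge\theta^{n-1}$, the product $dV_{ht}$ is invariant, and for natural coordinate bases this is exactly (\ref{vol_transf_M}). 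A conceptually cleaner version of this step, which also explains the link to bundle integrals promised in the introduction, uses the Legendre map $y\mapsto p$, $p_{i}=\tfrac12\partial L/\partial y^{i}=g_{ij}(x,y)y^{j}$, whose fiber Jacobian is exactly $\det g(x,y)$ (the Cartan terms cancel by $0$-homogeneity); then $|\det g|\,d^{n}y=|d^{n}p|$ and $\sigma_{ht}(x)=Vol\big(\mathcal{L}_{x}(E_{x}^{t_{0}})\big)/Vol(\mathbb{B})$ is manifestly a cotangent-fiber volume.

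The main obstacle is independence of the privileged time orientation. If $t_{0}$ and $t_{0}'$ are both privileged, the minimal-Riemannian argument yields only $|\det g^{t_{0}}(x)|=|\det g^{t_{0}'}(x)|$ from (\ref{equal_dets}), i.e., by (\ref{volume_Et}), equality of the Euclidean volumes $Vol(E_{x}^{t_{0}})=Vol(E_{x}^{t_{0}'})$. This does \emph{not} by itself force the two Holmes--Thompson integrals to agree, since $\int_{E}|\det g|\,d^{n}y$ weights the ellipsoid by the non-constant $0$-homogeneous factor $|\det g(x,\cdot)|$ and therefore depends on the full shape of $E$, not just on its volume. I expect this to be where the work lies, and I would attack it through the symplectic reformulation above together with the critical-point condition (\ref{Zero_Cartan_coords}), $\mathbf{C}(x,t_{0})=0$, which constrains the admissible ellipsoids more tightly than equality of determinants alone; absent such an additional constraint, $t_{0}$-independence should not be expected to hold in full generality.
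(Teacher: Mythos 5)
Your treatment of the first two properties coincides in substance with the paper's. For nondegeneracy, the paper likewise exploits the $0$-homogeneity of $\det g$ and the compactness of $\partial E_{x}^{t_{0}}$: it rewrites the ball integral as a boundary integral via (\ref{sphere_to_ball_rel}) and bounds $\sigma_{ht}(x)$ from below by $g_{\min }(x)\,Vol(E_{x}^{t_{0}})/Vol(\mathbb{B})>0$ as in (\ref{ineq_ht_bh}); your "bounded away from zero on the sphere of directions" argument is the same estimate. For the transformation law, the paper performs exactly your $(\det P)^{2}$ versus $(\det P)^{-1}$ bookkeeping, first for changes of basis of $T_{x}M$ and then for induced coordinate changes on $TM$. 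The Legendre-map reformulation is a pleasant aside but is not used in the paper.

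The genuine gap is that you do not prove the third property --- independence of the choice of privileged time orientation --- and indeed conjecture that it may fail. The paper does supply an argument, and it is the part of the proof you would have needed: writing $E_{x}^{t_{0}}=\varphi (\mathbb{B})$ and $E_{x}^{\tilde{t}_{0}}=\tilde{\varphi}(\mathbb{B})$ with linear maps whose Jacobians are $|\det g^{t_{0}}(x)|^{-1/2}$ and $|\det g^{\tilde{t}_{0}}(x)|^{-1/2}$ as in (\ref{Jacobian_lin_transf}), the equality (\ref{equal_dets}) makes $\varphi \circ \tilde{\varphi}^{-1}$ a unimodular linear map carrying $E_{x}^{\tilde{t}_{0}}$ onto $E_{x}^{t_{0}}$; one changes variables by it in the integral over $E_{x}^{t_{0}}$ and then transports the weight by appealing to the tensorial behaviour of $g_{ij}(x,\cdot )$ under the associated change of basis of $T_{x}M$, whose determinant is $1$, to conclude $\det g(x,y)=\det g(x,\tilde{y})$. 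So equal volumes are used not as the conclusion but to manufacture a measure-preserving linear identification of the two ellipsoids, which is the idea missing from your write-up. That said, your objection is aimed at precisely the load-bearing step: after the change of variables everything reduces to the identity $\det g(x,(\varphi \circ \tilde{\varphi}^{-1})(\tilde{y}))=\det g(x,\tilde{y})$, which asks for invariance of $\det g(x,\cdot )$ under an \emph{active} linear map, while the tensoriality invoked by the paper is a \emph{passive} (change-of-basis, fixed vector) statement. If you complete your proof along the paper's lines, this is the one step you must examine rather than wave through; your suspicion that equality of volumes alone cannot force equality of the weighted integrals is correct, and the critical-point condition (\ref{Zero_Cartan_coords}) is the only further information available.
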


\begin{proof}
\textit{1. Nondegeneracy: }Fix an arbitrary atlas on $TM$. Taking into
account that the function $\left\vert \det (g)\right\vert $ is smooth on $%
TM^{o}$ and 0-homogeneous in $y,$ the integral (\ref{HT_volume_Lorentzian})
is well defined and it can be expressed, using (\ref{sphere_to_ball_rel}),
as an integral on the boundary $\partial E_{x}^{t_{0}}$:%
\begin{equation*}
\underset{E_{x}^{t_{0}}}{\int }\left\vert \det g(x,y)\right\vert d^{n}y=%
\dfrac{1}{n}\underset{\partial E_{x}^{t_{0}}}{\int }\left\vert \det
g(x,y)\right\vert \lambda .
\end{equation*}%
As $\partial E_{x}^{t_{0}}$ is compact and $\left\vert \det g\right\vert $
is continuous on $\partial E_{x}^{t_{0}}$, the minimum 
\begin{equation*}
\underset{(y^{i})\in \partial E_{x}^{t}}{\min }\left\vert \det
g(x,y)\right\vert =:g_{\min }(x)
\end{equation*}%
always exists - and, under the above assumptions, it is strictly positive.
Taking, again, into account the 0-homogeneity of $g,$ we have: $g_{\min }(x)=%
\underset{(y^{i})\in E_{x}^{t}}{\min }\left\vert \det g(x,y)\right\vert ,$
therefore,%
\begin{equation}
\sigma _{ht}(x)\geq \dfrac{g_{\min }(x)}{Vol(\mathbb{B})}\underset{%
E_{x}^{t_{0}}}{\int }d^{n}y=g_{\min }(x)\dfrac{Vol(E_{x}^{t_{0}})}{Vol(%
\mathbb{B})}>0,~\   \label{ineq_ht_bh}
\end{equation}%
which proves the statement.

\textit{2. The rule of transformation} (\ref{vol_transf_M}) with respect to
coordinate changes $(x^{i})\mapsto (x^{i^{\prime }})$ on $M:$

Fix $x\in M.$ A brief computation using (\ref{scalar_prod_g}) shows that $%
dV_{ht}$ is invariant to changes of bases $\{\theta ^{i}\}\rightarrow
\{\theta ^{i^{\prime }}\}$ on each cotangent space $T_{x}^{\ast }M$. Hence,
we can take with no loss of generality $\theta ^{i}:=dx^{i},$ i.e., in (\ref%
{HT_volume_Lorentzian}), $dV_{ht}=\sigma _{ht}(x)d^{n}x$ and $%
g_{ij}=g_{(x,y)}(\dfrac{\partial }{\partial x^{i}},\dfrac{\partial }{%
\partial x^{j}}).$

Consider an arbitrary coordinate change $(x^{i})\mapsto (x^{i^{\prime }})$
on $M.$ Relative to the induced coordinate change: $x^{i}=x^{i}(x^{k^{\prime
}}),$ $y^{i}=\dfrac{\partial x^{i}}{\partial x^{k^{\prime }}}y^{k^{\prime }}$
on $TM$, the functions $g_{ij}$ transform as: $g_{ij}=\dfrac{\partial
x^{i^{\prime }}}{\partial x^{i}}\dfrac{\partial x^{j^{\prime }}}{\partial
x^{j}}g_{i^{\prime }j^{\prime }};$ therefore, $\det g(x,y)=[\det (\dfrac{%
\partial x^{i^{\prime }}}{\partial x^{j}})]^{2}\det g(x^{\prime },y^{\prime
}),$ which, substituted into (\ref{HT_volume_Lorentzian}), gives the result.

\textit{3. Independence on the choice of the privileged time orientation }$%
t_{0}$\textit{: }Let $t_{0},\tilde{t}_{0}\in \Gamma (A^{+})$ be two
privileged time orientations. Fix a local chart $(U,\varphi )$ on $M$, an
arbitrary point $x\in U$ and a basis $\{b_{i}\}$ of $T_{x}M.$ Then, each of
the ellipsoids $E_{x}^{t_{0}},$ $E_{x}^{\tilde{t}_{0}}$ is the image of $%
\mathbb{B}$ through an invertible linear transformation, as in (\ref%
{linear_coord_change}). More precisely, take: $\varphi ,\tilde{\varphi}:%
\mathbb{R}^{n}\rightarrow \mathbb{R}^{n},$ given by: $u^{i^{\prime }}\mapsto
y^{i}:=a_{j^{\prime }}^{i}u^{j^{\prime }}$ and $u^{i^{\prime }}\mapsto 
\tilde{y}^{i}:=\tilde{a}_{j^{\prime }}^{i}u^{j^{\prime }}$ respectively,
such that 
\begin{equation*}
\varphi (\mathbb{B})=E_{x}^{t_{0}},~\ \ \tilde{\varphi}(\mathbb{B})=E_{x}^{%
\tilde{t}_{0}}.
\end{equation*}%
The corresponding Jacobian determinants are as in (\ref{Jacobian_lin_transf}%
), i.e.,%
\begin{equation*}
\det (a_{j^{\prime }}^{i})=\left\vert \det g^{t_{0}}(x)\right\vert
^{-1/2},~\ \det (\tilde{a}_{j^{\prime }}^{i})=\left\vert \det g^{\tilde{t}%
_{0}}(x)\right\vert ^{-1/2}.
\end{equation*}%
Since $t_{0}$ and $\tilde{t}_{0}$ are both privileged time orientations, we
have: $\left\vert \det g^{t_{0}}(x)\right\vert ^{-1/2}=\left\vert \det g^{%
\tilde{t}_{0}}(x)\right\vert ^{-1/2}.$ Therefore, the linear mapping%
\begin{equation}
\varphi \circ \tilde{\varphi}^{-1}:\mathbb{R}^{n}\mapsto \mathbb{R}^{n},~(%
\tilde{y}^{i})\mapsto (y^{j})  \label{diffeo_Et0}
\end{equation}%
is volume-preserving, i.e., $\det (\dfrac{\partial y^{i}}{\partial \tilde{y}%
^{j}})=1,$ and maps diffeomorphically $E_{x}^{\tilde{t}_{0}}$ to $%
E_{x}^{t_{0}}.$ Taking into account that, by their definition, the functions 
$g_{ij}(x,y)$ behave tensorially under \textit{linear} transformations $(%
\tilde{y}^{i})\mapsto (y^{j})$ on $\mathbb{R}^{n}$ (which can be traced back
to changes of bases $\{b_{i}\}\rightarrow \{\tilde{b}_{i}\}$ on $T_{x}M$),
we have: $\det g(x,y)=[\det (\dfrac{\partial \tilde{y}^{i}}{\partial y^{j}}%
)]^{2}\det g(x,\tilde{y})=\det g(x,\tilde{y})$ and%
\begin{equation*}
\dfrac{1}{Vol(\mathbb{B})}\underset{E_{x}^{t_{0}}}{\int }\left\vert \det
g(x,y)\right\vert d^{n}y=\dfrac{1}{Vol(\mathbb{B})}\underset{E_{x}^{\tilde{t}%
_{0}}}{\int }\left\vert \det g(x,\tilde{y})\right\vert d^{n}\tilde{y},
\end{equation*}%
i.e. $\sigma _{ht}(x)$ does not depend on the choice of the privileged time
orientation.
\end{proof}

\bigskip

\textbf{Particular case.} If $(M,L)$ is Riemannian, then, using (\ref%
{volume_Et}), we get:%
\begin{equation*}
\sigma _{ht}(x)=\dfrac{\left\vert \det g(x)\right\vert }{Vol(\mathbb{B})}%
Vol(E_{x}^{t})=\sqrt{\left\vert \det g(x)\right\vert },
\end{equation*}%
i.e., the Holmes-Thompson volume form (\ref{HT_volume_Lorentzian}) reduces
to the Riemannian volume form $dV_{g}.$

\bigskip

\textbf{Field-theoretical integrals with direction dependent fields. }Just
as in the positive definite case (\cite{Shen1}, p. 26), the Holmes-Thompson
volume form is tightly related to a volume form on $TM$. This allows us to
naturally introduce field-theoretical actions in the case when the fields
also depend on the directional variables $y^{i},$ i.e., they are represented
by sections $(x,y)\mapsto q^{\sigma }(x,y)$ of some fibered manifold over $%
TM $.

Consider a smooth Lagrangian function 
\begin{equation*}
\mathfrak{L}(x,y):\mathfrak{=L}(x,y,q^{\sigma }(x,y),q_{,i}^{\sigma
}(x,y),q_{\cdot i}^{\sigma }(x,y),....,q_{,i_{1}...\cdot i_{r}}^{\sigma
}(x,y))
\end{equation*}%
on $TM$ (where $_{,i}$ and $_{\cdot i}$ denote partial differentiation with $%
x^{i}$ and $y^{i}$ respectively), which is invariant under arbitrary
coordinate changes on $TM.$ The \textit{action} attached to $\mathfrak{L}$
and to a compact domain $D\subset M$ can be defined\footnote{%
A somewhat similar expression of a Finslerian action is to be found in \cite%
{Wohlfarth}; the diference is that, in the cited paper, integration of the
Lagrangian with respect to the fiber coordinates $y^{i}$ is carried out on
the indicatrices $S_{x}$ (given by $\left\vert L\right\vert =1$) of the
initial Lorentz-Finsler metric - which are non-compact, thus leading to
improper integrals. Here, these indicatrices are replaced by the compact
sets $E_{x}^{t_{0}}.$} as:%
\begin{equation}
S_{D}(q)=\dfrac{1}{Vol(\mathbb{B})}\underset{D}{\int }[\underset{%
E_{x}^{t_{0}}}{\int }\mathfrak{L}(x,y)\left\vert \det g(x,y)\right\vert
d^{n}y]\theta ^{0}\wedge ...\wedge \theta ^{n-1}.  \label{action_TM}
\end{equation}%
By a similar reasoning to the one in Proposition \ref{HT_Prop}, we find that
the value $S_{D}(q)$ does not depend on the choice of the privileged time
orientation $t_{0}$.

\begin{remark}
If the determinant $\det (g)$ cannot be continuously prolonged by nonzero
values to the entire slit tangent bundle $TM^{o},$ then (\ref{action_TM})
cannot be constructed. In this case, we can still obtain a well-defined
action if we replace in (\ref{action_TM}), $\left\vert \det
g(x,y)\right\vert ,$ by $\left\vert \det (g^{t_{0}}(x))\right\vert .$
\end{remark}

\section{Examples}

\subsection{\label{linearized}Linearized perturbations of the Minkowski
metric}

Consider, on the Minkowski spacetime $(\mathbb{R}^{4},\eta )$ (where $\eta
=diag(1,-1,-1,-1)$), an arbitrary smooth, positive definite Finsler metric
tensor $\gamma _{ij}=\gamma _{ij}(x,y)$ and a small constant $\varepsilon
>0, $ with $\varepsilon ^{2}\simeq 0.$ We define, on $T\mathbb{R}%
^{4}\backslash \{0\},$ the function:%
\begin{equation}
L(x,y)=\eta _{ij}y^{i}y^{j}+\varepsilon \gamma _{ij}(x,y)y^{i}y^{j}.
\label{linearized_L}
\end{equation}%
This gives a smooth Lorentz-Finsler structure on $\mathbb{R}^{4}$, with
metric tensor $g_{ij}(x,y)=\eta _{ij}+\varepsilon \gamma _{ij}(x,y).$ Its
determinant%
\begin{equation}
\left\vert \det (g(x,y))\right\vert =1+\varepsilon \eta ^{ij}\gamma
_{ij}(x,y)  \label{linearized_g}
\end{equation}%
is 0-homogeneous in $y$ and smooth on $T\mathbb{R}^{4}\backslash \{0\},$
hence, it admits a nonzero global minimum on each tangent space $T_{x}%
\mathbb{R}^{4}$. Privileged time orientations $t_{0}\in \Gamma (T\mathbb{R}%
^{4}\backslash \{0\})$ are solutions of (\ref{zero_deriv_gt}), i.e., $\eta
^{ij}\gamma _{ij\cdot k}(t)=0.$ Once a privileged time orientation is
chosen, we can write:%
\begin{equation*}
dV_{bh}=\sqrt{\left\vert \det g(x,t_{0,x})\right\vert }d^{4}x
\end{equation*}%
and $dV_{ht}$ is given by (\ref{HT_volume_Lorentzian}).

\subsection{Berwald-Moor metric\label{BM}}

Consider, on $M=\mathbb{R}^{4},$ a sign-adjusted version of the \textit{%
Berwald-Moor} quartic Finslerian metric, \cite{Bogoslovsky}:%
\begin{equation*}
L=sgn(y^{0}y^{1}y^{2}y^{3})\sqrt{\left\vert y^{0}y^{1}y^{2}y^{3}\right\vert }%
.
\end{equation*}%
The sign $sgn(y^{0}y^{1}y^{2}y^{3})$ (which does not appear in \cite%
{Bogoslovsky}) is introduced in order to allow $L$ to also take negative
values - and hence, to be able to define $L$-spacelike vectors.

The corresponding metric tensor%
\begin{equation}
\left( g_{ij}\right) =\left\{ 
\begin{array}{c}
-\dfrac{1}{8}\dfrac{L}{(y^{i})^{2}},~\ i=j\,\  \\ 
\dfrac{1}{8}\dfrac{L}{y^{i}y^{j}},~\ i\not=j%
\end{array}%
\right.  \label{mBM}
\end{equation}%
is only defined on $T\mathbb{R}^{4}\backslash \{y~|~\exists i:y^{i}=0\}$ -
and tends to infinity as we approach any of the hyperplanes $y^{i}=0$.
Still, its determinant 
\begin{equation}
\det (g_{ij}(y))=-2^{-8},~\ \forall (y^{i})\in \mathbb{R}^{4}
\label{det_B_M}
\end{equation}%
is a constant and hence, admits a smooth prolongation to the entire $T%
\mathbb{R}^{4}\backslash \{0\}.$ Any time orientation $t$ is a privileged
one and gives the same value $\sqrt{\left\vert \det (g^{t})\right\vert }%
=2^{-4}.$ Substituting into (\ref{minimal_R_volume}), the minimal Riemannian
volume is:%
\begin{equation*}
dV_{bh}=2^{-4}d^{4}x.
\end{equation*}%
The Holmes-Thompson volume is given by:%
\begin{equation*}
\sigma _{ht}=\dfrac{1}{Vol\left( \mathbb{B}\right) }\underset{E_{x}^{t}}{%
\int }\dfrac{1}{2^{8}}d^{n}y=\dfrac{1}{2^{8}}\dfrac{Vol(E_{x}^{t})}{%
Vol\left( \mathbb{B}\right) }=2^{-4},
\end{equation*}%
(where we have used the equalities $\dfrac{Vol\left( E_{x}^{t}\right) }{%
Vol\left( \mathbb{B}\right) }=\left\vert \det (g^{t})\right\vert
^{-1/2}=2^{4}$). That is:%
\begin{equation*}
dV_{bh}=dV_{ht}=2^{-4}d^{4}x.
\end{equation*}

\bigskip

\textbf{Remark. }From (\ref{Cartan_vector_det}) and (\ref{det_B_M}), we find
out that the Cartan form $\mathbf{C}$ of $g$ identically vanishes - and yet, 
$g$ is non-Riemannian. This points out that positive definiteness and/or
smoothness of the metric are essential hypotheses for Deicke's theorem.

\subsection{\label{Bogoslovsky}A Bogoslovsky type metric}

Bogoslovsky metrics, expressible as: $L=\left( n_{i}y^{i}\right) ^{2b}\left(
\eta _{jk}y^{j}y^{k}\right) ^{1-b}$, where $b\in (0,1)$ and $n_{i}\in 
\mathbb{R}$ are covector components, are connected to very special
relativity, \cite{Bogoslovsky}. In the following, we will study a toy model
on $\mathbb{R}^{2},$ with $b=1/2$:%
\begin{equation}
L=y^{0}\sqrt{\left\vert (y^{0})^{2}-(y^{1})^{2}\right\vert }.
\label{Bogoslovsky_metric}
\end{equation}%
The metric tensor: 
\begin{equation*}
g(y)=\dfrac{1}{2\left\vert (y^{0})^{2}-(y^{1})^{2}\right\vert ^{3/2}}\left( 
\begin{array}{cc}
y^{0}[2(y^{0})^{2}-3(y^{1})^{2}] & (y^{1})^{3} \\ 
(y^{1})^{3} & -(y^{0})^{3}%
\end{array}%
\right)
\end{equation*}%
is only defined and invertible outside the lightlike directions $y^{1}=\pm
y^{0}.$ Its determinant:%
\begin{equation*}
\det g(y)=-\dfrac{2\left( y^{0}\right) ^{2}+\left( y^{1}\right) ^{2}}{%
4\left\vert (y^{0})^{2}-(y^{1})^{2}\right\vert }
\end{equation*}%
tends to minus infinity as we approach these axes - hence, we cannot prolong
it by continuity at $y^{1}=\pm y^{0}$; therefore, we will only determine in
this case the minimal Riemannian volume element $dV_{bh}.$

Critical directions $t=(t^{0},t^{1})$ for $\left\vert \det (g)\right\vert $
are $t^{0}=0$ and $t^{1}=0$. The former cannot be used as a time
orientation, since it is lightlike, that is, the only viable candidate for
the privileged time orientation is $t^{1}=0$ (and $t^{0}>0$). For this
direction, we find $\left\vert \det (g^{t})\right\vert =1/2$. Substituting
this value into the expression of $dV_{bh},$ we obtain the minimal
Riemannian volume element as:%
\begin{equation*}
dV_{bh}=2^{-1/2}d^{2}x.
\end{equation*}

\textbf{Acknowledgment. }Special thanks to Prof. Demeter Krupka for the
useful talks and advice, which led to the idea of minimal Riemannian volume
element exposed here.

\end{document}